\documentclass[11pt,a4paper]{amsart}
\usepackage[T1]{fontenc}
\usepackage[utf8]{inputenc}
\usepackage{lmodern}
\usepackage{amsmath,amssymb,amsthm}
\usepackage{microtype}
\usepackage{needspace}
\usepackage[a4paper,textwidth=158mm,textheight=235mm,centering]{geometry}
\usepackage[hidelinks]{hyperref}
\hypersetup{
  pdftitle={A sharp bound for additive complements},
  pdfsubject={Additive complements and the sharp constant three halves},
  pdfkeywords={additive complements, counting functions, representation functions, convolution},
  pdfauthor={}
}
\numberwithin{equation}{section}
\newtheorem{theorem}{Theorem}[section]
\newtheorem{lemma}[theorem]{Lemma}
\newcommand{\Nzero}{\mathbb{N}_0}
\newcommand{\ind}{\mathbf{1}}
\newcommand{\abs}[1]{\lvert #1\rvert}

\allowdisplaybreaks[1]

\title[A sharp bound for additive complements]{A sharp bound for additive complements}
\author{Zihan Zhang}
\address{School of Mathematical Sciences and LPMC,
Nankai University, Tianjin 300071, People's Republic of China}
\email{2211056@mail.nankai.edu.cn}
\date{}
\subjclass[2020]{Primary 11B13. Secondary 11B34}
\keywords{Additive complements, counting functions, representation functions, convolution}

\begin{document}

\begin{abstract}
Let $A$ and $B$ be infinite sets of nonnegative integers whose sum contains all sufficiently large integers, and let $A(x)$ and $B(x)$ be their counting functions. We give a discrete proof of the sharp implication
\[
  \limsup_{x\to\infty}\frac{A(x)B(x)}{x}<\frac32
  \quad\Longrightarrow\quad
  A(x)B(x)-x\longrightarrow+\infty.
\]
The constant $3/2$ is best possible. Using finite convolution sums, we show that a long gap in one set forces a gap in the other. We then compare the counting functions before and after the gap to obtain the sharp bound.
\end{abstract}

\maketitle

\section{Introduction}
\label{sec:introduction}

We write $\Nzero=\{0,1,2,\ldots\}$. Two infinite sets $A,B\subseteq\Nzero$ are called additive complements if $A+B$ contains all sufficiently large integers. For real $x\ge0$, put
\[
  A(x)=\#(A\cap[0,x]),\qquad B(x)=\#(B\cap[0,x]).
\]
Every sufficiently large integer at most $x$ has a representation using elements at most $x$, so $A(x)B(x)\ge x-O(1)$.

Hanani asked whether every pair of infinite additive complements satisfies
\[
  \limsup_{x\to\infty}\frac{A(x)B(x)}{x}>1.
\]
Erd\H{o}s recorded this question in \cite[Problem~12, p.~295]{Erdos57}. Danzer \cite{Danzer64} answered it negatively by constructing complements for which
\begin{equation}
  \frac{A(x)B(x)}{x}\longrightarrow1.
  \label{eq:exact}
\end{equation}
Following Ruzsa \cite[Section~1]{Ruzsa17}, such pairs are called exact additive complements. This terminology refers to the counting functions and does not require unique representations.

Danzer also asked whether \eqref{eq:exact} forces
\begin{equation}
  A(x)B(x)-x\longrightarrow+\infty.
  \label{eq:excess}
\end{equation}
S\'ark\"ozy and Szemer\'edi \cite{SS94} proved that it does. Ruzsa \cite[Theorem~1.3]{Ruzsa17} later showed that the excess can grow arbitrarily slowly along a sequence: for any function tending to infinity, there are exact complements whose excess is smaller than that function at arbitrarily large arguments.

Fang and Chen \cite{FC10} proved \eqref{eq:excess} under the weaker assumption
\[
  \limsup_{x\to\infty}\frac{A(x)B(x)}{x}<\frac54.
\]
They replaced $5/4$ by $3-\sqrt3$ in \cite[Theorem~1.1]{FC14}, and Yang and Zhao \cite[Theorem~1.1]{YZ26} obtained $4/3$. Chen and Fang \cite{CF11} also constructed complements with upper limit $3/2$ and excess equal to $1$ at infinitely many integers. Thus $3/2$ is the largest possible threshold.

A measure-theoretic proof of the sharp $3/2$ bound was announced by van Doorn, Liu and Tang on 5~August~2026 in the discussion of Erd\H{o}s Problem~785 \cite{VLT26}. The announcement credits GPT-5.6 Sol with obtaining the proof. The accompanying note takes weak limits of normalized counting measures and uses Gabardo and Lai's characterization \cite{GL14} of probability measures whose convolution is Lebesgue measure on $[0,1]$. However, they think the proof by is too surprising that they are working now to find a human-generated version. We give a proof of the same conclusion using elementary counting and finite convolutions.

\begin{theorem}
\label{thm:main}
Let $A$ and $B$ be infinite additive complements in $\Nzero$. If
\begin{equation}
  \limsup_{x\to\infty}\frac{A(x)B(x)}{x}<\frac32,
  \label{eq:main-hypothesis}
\end{equation}
then
\[
  A(x)B(x)-x\longrightarrow+\infty.
\]
\end{theorem}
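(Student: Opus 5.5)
We argue by contradiction. Suppose the excess does not tend to $+\infty$. Then there is a constant $K$ and an increasing sequence $x_n\to\infty$ with $A(x_n)B(x_n)\le x_n+K$. Since every integer $m\in(N_0,x_n]$ has a representation $m=a+b$ with $a\in A\cap[0,x_n]$ and $b\in B\cap[0,x_n]$, the convolution
\[
  r(m)=\sum_{a+b=m}\ind_A(a)\ind_B(b)
\]
satisfies
\[
  \sum_{m\le x_n} r(m)\le A(x_n)B(x_n)\le x_n+K.
\]
Hence all but $O(K+N_0)$ integers $m\le x_n$ have exactly one representation. Moreover, every pair $(a,b)$ counted in $A(x_n)B(x_n)$ has $a+b\le x_n$ except for at most $O(K)$ pairs. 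So $A\cap[0,x_n]$ and $B\cap[0,x_n]$ form a near-tiling of $[0,x_n]$, with almost all sums landing at most $x_n$. The plan is to extract rigid structure from this near-tiling and then show that it forces $A(y)B(y)/y\ge 3/2-o(1)$ at some $y$ of comparable size. That contradicts \eqref{eq:main-hypothesis}.

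\textbf{Step 1 (largest elements and gaps).} Let $a^*=\max A\cap[0,x_n]$ and $b^*=\max B\cap[0,x_n]$. Since $a^*+b^*\le x_n$ up to $O(K)$ exceptional pairs, and both $a^*+\min B$ and $\min A+b^*$ are essentially at most $x_n$, both sets lie well inside $[0,x_n]$. The near-tiling identity $\sum_m r(m)=x_n+O(1)$ over the window then forces the following. If $a^*\le \theta x_n$, then $A$ has no elements in $(\theta x_n,x_n]$, a long gap, while $B$ must cover $(\theta x_n,x_n]$ via translates $b+A$ with $b>(1-\theta)x_n$-ish. Using finite convolution sums restricted to windows, I will show a \emph{gap transfer} lemma: if $A\cap(u,v]=\emptyset$ with $v-u$ large, then near-exactness of $r$ on $[0,x_n]$ forces $B$ to have a corresponding gap of proportional length. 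The idea is to compare $\sum_{m\in I} r(m)$ for intervals $I$ shifted by the gap, counting pairs with $a\le u$.

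\textbf{Step 2 (comparing before and after the gap).} Write $\alpha=A(u)$ and note $A(x_n)=A(u)$. Also, $B$ restricted to $[0,x_n-u]$ tiles with $A\cap[0,u]$ up to $O(K)$, so $\alpha\, B(x_n-u)\ge x_n-u-O(1)$, while $\alpha\, B(x_n)\le x_n+K$. Evaluating the product at the intermediate point $y=u$, or $y=v$, where $A(y)=\alpha$ but $y$ is smaller, gives $A(y)B(y)/y$ large. The crucial inequality will come from optimizing over the position of the gap: at $y=u$ we get $A(u)B(u)\ge \alpha B(x_n-u)$ whenever $u\ge x_n-u$, yielding $A(u)B(u)/u\ge (x_n-u)/u$. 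In the other regime, the transferred gap in $B$ gives the symmetric bound. Balancing the two regimes, together with a second application at $y=v$ using the tail of $B$, should yield $\max_y A(y)B(y)/y\ge 3/2-o(1)$. This matches the Chen–Fang extremal example, where gaps sit at proportions $1/3,2/3$.

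\textbf{Main obstacle.} The hard part is Step 1's gap transfer with only $O(K)$ error, and making the balancing in Step 2 reach exactly $3/2$ rather than a weaker constant such as $4/3$ (earlier methods stalled there). I would first try to iterate the gap structure at two scales, $x_n$ and a rescaled point $\approx 2x_n/3$. The aim is a two-gap configuration whose worst case is the $3/2$ extremal pattern, with a case split according to whether $a^*$ or $b^*$ exceeds $x_n/2$.
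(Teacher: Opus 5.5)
Your setup is fine: along $x_n$ the set of $m\ge N_0$ with $r(m)\ge2$ has bounded size, so $r(n)=1$ for all $n\ge n_0$. But everything after that is deferred, and the deferred steps are the whole proof.

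\textbf{First gap: the shape of the gap.} You never obtain consecutive $x<y$ in one set with $y\ge 3x+n_0$, and your Step~2 computation gives nothing. At $y=u\ge x_n-u$ the bound $A(u)B(u)/u\ge(x_n-u)/u$ is at most $1$, and $\alpha B(x_n-u)\ge x_n-u-O(1)$ is trivial. The paper instead uses the \emph{upper} information carried by $T(X)=O(1)$. Pairs with both coordinates in $(X/2,X]$ are counted by $T$, so, since $\kappa<2$, one set, say $A$, misses $(X/2,X]$. Then with $x=\max A\cap[0,X]$, each $b\in B\cap(X-x,X]$ gives a pair $(x,b)$ counted by $T(X)$. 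Hence $A(X-x)B(X-x)=X+o(X)$, and $\kappa<3/2$ forces $x/X\le(\kappa-1)/\kappa+o(1)<1/3$. The right evaluation point is $X-x$, not $u$.

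\textbf{Second gap: gap transfer.} This is only a hope, and the form you propose (``a gap of proportional length'' found by comparing window sums of $r$) is not what holds or what is needed. The paper proves $B\cap[y-x+n_0,\,y+x+n_0-1]=\varnothing$. This is a gap of length about $2x$ centred at the \emph{end} $y$ of $A$'s gap, and it uses $y\ge3x+n_0$ essentially. The argument sets $f=\ind_{A\cap[0,x]}$, lets $\widetilde f$ be its reflection, and puts $D=\ind_{\Nzero}-\widetilde f*\ind_B$. One shows $f*D=0$ on $[x+n_0,y)$, and uses uniqueness of differences to get $D\ge0$ on $[x+n_0,\infty)$. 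A positivity plus largest-nonzero-index argument then gives $D=0$ on $[n_0,2x+n_0-1]$. So every integer in $[n_0-x,x+n_0-1]$ is some $b'-a$ with $a\le x$, and uniqueness of differences excludes $B$ from that interval.

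\textbf{Third gap: where $3/2$ comes from.} It is not found at $u$ or $v$: at $y$ you only get $A(y)B(y)\approx y$. Nor does it come from ``balancing regimes''. It comes from evaluating at $y+x$, using two estimates for $0\le t\le x$, both consequences of the $B$-gap:
\[
A(y+t)=A(x)+A(t)+O(1),\qquad A(x)B(y+t)=y+O(1).
\]
At $t=x$ these give $A(y+x)B(y+x)\approx 2y\ge\frac32(y+x)$, since $y\ge 3x$.

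Without these three ingredients the proposal restates the goal rather than reaching it.
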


The proof begins with a standard observation: if the excess stays bounded along a sequence, then every sufficiently large integer has a unique representation as a sum. This reduction was already used in \cite[p.~238]{SS94} and \cite[p.~85]{FC14}. The upper limit assumption then gives consecutive elements $x<y$ of one set with $y\ge3x+n_0$, where $n_0$ is fixed.

Our use of these gaps is motivated by the $3/2$ example of Chen and Fang \cite{CF11}, discussed in Section~\ref{sec:sharpness}. We prove, uniformly for integers $0\le t\le x$, that
\[
  A(y+t)=A(x)+A(t)+O(1),\qquad
  A(x)B(y+t)=y+O(1).
\]
The first estimate describes how $A$ grows after the gap; the second controls $B$ on the same interval. At $t=x$, they give
\[
  \frac{A(y+x)B(y+x)}{y+x}
  \ge (2-o(1))\frac{y}{y+x}\ge\frac32-o(1)
\]
along the sequence of gaps. The reflected convolution identity used below also appears in \cite[Lemma~3.1]{YZ26}. The longer gap lets us show that certain coefficients vanish exactly, which yields both counting estimates.

Section~\ref{sec:preliminaries} collects the basic lemmas. Sections~\ref{sec:reduction} and~\ref{sec:relations} give the gaps and the two counting estimates. We finish the proof in Section~\ref{sec:main-proof} and verify sharpness in Section~\ref{sec:sharpness}.

\section{Preliminary lemmas}
\label{sec:preliminaries}

For $n\in\Nzero$, let
\[
  r(n)=\#\{(a,b)\in A\times B\mid a+b=n\}.
\]
For a nonnegative integer $X$, also let
\[
  T(X)=\#\{(a,b)\in A\times B\mid a\le X,\ b\le X,\ a+b>X\}.
\]
Splitting the pairs according to whether their sum exceeds $X$ gives
\begin{equation}
  A(X)B(X)=\sum_{n=0}^{X}r(n)+T(X).
  \label{eq:rectangle}
\end{equation}

We include the standard counting argument from \cite[p.~238]{SS94}; see also \cite[p.~85]{FC14}.

\begin{lemma}
\label{lem:bounded}
Suppose that $A$ and $B$ are infinite additive complements and that $A(x)B(x)-x$ does not tend to $+\infty$. There is an integer $n_0\ge1$ such that
\begin{equation}
  r(n)=1\qquad(n\ge n_0).
  \label{eq:eventual}
\end{equation}
There is also a strictly increasing sequence of integers $X_j$ tending to infinity such that
\begin{equation}
  A(X_j)B(X_j)=X_j+O(1),\qquad T(X_j)=O(1).
  \label{eq:tight-sequence}
\end{equation}
\end{lemma}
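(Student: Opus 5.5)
Since $A(x)B(x)-x$ does not tend to $+\infty$, there are a constant $K$ and integers $X_j\to\infty$, strictly increasing after passing to a subsequence, with $A(X_j)B(X_j)\le X_j+K$. We may take the $X_j$ to be integers, because the counting functions are constant on $[m,m+1)$ and $x\ge\lfloor x\rfloor$. Let $N$ be such that $r(n)\ge1$ for every $n\ge N$; this $N$ exists because $A+B$ contains all sufficiently large integers. By \eqref{eq:rectangle},
\[
  X_j+K\ \ge\ A(X_j)B(X_j)=\sum_{n=0}^{X_j}r(n)+T(X_j)\ \ge\ (X_j-N+1)+\sum_{n=N}^{X_j}\bigl(r(n)-1\bigr)+T(X_j).
\]
Both sums of nonnegative terms are therefore at most $K+N$. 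This gives $T(X_j)=O(1)$ and $A(X_j)B(X_j)=X_j+O(1)$, since the lower bound $A(X_j)B(X_j)\ge X_j-N+1$ is automatic. It also shows that $\sum_{n=N}^{X_j}(r(n)-1)\le K+N$ for every $j$. Letting $j\to\infty$, the total excess $\sum_{n\ge N}(r(n)-1)$ is finite. Each term is a nonnegative integer, so only finitely many $n$ have $r(n)\ge2$. Hence $r(n)=1$ for all $n$ beyond some $n_0$.

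The argument has no real obstacle. The one step that needs care is getting a uniform bound on the partial excess sums from a single subsequence. This works because the partial sums are monotone in $X_j$ and $X_j\to\infty$, so bounding them along the subsequence bounds the whole series.
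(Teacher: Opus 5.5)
Your proof is correct and follows essentially the same route as the paper: the rectangle identity bounds the nonnegative partial excess sums along $X_j$, monotonicity makes the whole excess series finite, and you read off $T(X_j)=O(1)$ from that same inequality instead of afterwards, as the paper does. One small slip: passing to integer parts needs $\lfloor x\rfloor>x-1$, not $x\ge\lfloor x\rfloor$ (which points the wrong way), so $K$ should be replaced by $K+1$; this is harmless.
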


\begin{proof}
Choose $N\ge1$ such that $r(n)\ge1$ for $n\ge N$. The excess is bounded below. Since it does not tend to infinity, it is bounded above along some sequence tending to infinity. Taking integer parts and then a subsequence gives strictly increasing integers $X_j$ with $A(X_j)B(X_j)=X_j+O(1)$.

For every integer $X\ge N$, \eqref{eq:rectangle} gives
\[
  A(X)B(X)-X
  \ge 1-N+\sum_{n=N}^{X}\bigl(r(n)-1\bigr).
\]
The sum has nonnegative integer terms and is bounded along $X_j$. Its partial sums are nondecreasing, so only finitely many terms are nonzero. This proves \eqref{eq:eventual} for some $n_0$. It follows that $\sum_{n=0}^{X}r(n)=X+O(1)$, and \eqref{eq:rectangle} then gives $T(X_j)=O(1)$.
\end{proof}

We also need a bound on the number of representations of a difference. For $n\ge0$, define
\[
\begin{aligned}
  d_{A,B}(n)&=\#\{(a,b)\in A\times B\mid a-b=n\},\\
  d_{B,A}(n)&=\#\{(a,b)\in A\times B\mid b-a=n\}.
\end{aligned}
\]

\begin{lemma}
\label{lem:differences}
Suppose that $r(n)=1$ for every $n\ge n_0$. If two distinct pairs satisfy
\[
  b-a=b'-a',\qquad (a,b),(a',b')\in A\times B,
\]
then $a,a',b,b'<n_0$. In particular,
\begin{equation}
  d_{A,B}(n)\le1,\qquad d_{B,A}(n)\le1
  \qquad(n\ge n_0).
  \label{eq:eventual-differences}
\end{equation}
\end{lemma}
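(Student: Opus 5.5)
The plan is the classical swap argument: from two distinct pairs with equal difference we produce a single integer with two distinct representations as $a+b$, and then use \eqref{eq:eventual} to force that integer to lie below $n_0$.

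Suppose $(a,b)\neq(a',b')$ lie in $A\times B$ and $b-a=b'-a'$. Rearranging gives $a'+b=a+b'$, so
\[
  m:=a'+b=a+b'.
\]
Both $(a',b)$ and $(a,b')$ belong to $A\times B$. We check that they are distinct pairs. If $a'=a$, then $b'=b$ by the equality of differences, and so $(a,b)=(a',b')$, contrary to hypothesis. Hence $a\ne a'$, and the two pairs $(a',b)$ and $(a,b')$ differ in their first coordinate. Therefore $r(m)\ge 2$. By \eqref{eq:eventual} this forces $m<n_0$.

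All four elements $a,a',b,b'$ are nonnegative and each is bounded by $m$: we have $a',b\le a'+b=m$ and $a,b'\le a+b'=m$. Thus every one of them is less than $n_0$.

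For \eqref{eq:eventual-differences}, fix $n\ge n_0$.

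\emph{The bound on $d_{B,A}$.} If $d_{B,A}(n)\ge2$, there are two distinct pairs with $b-a=b'-a'=n$. By the first part, $b<n_0$. But $b=a+n\ge n\ge n_0$, a contradiction.

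\emph{The bound on $d_{A,B}$.} Here $a-b=a'-b'=n$, which is equivalent to $b-a=b'-a'$. The first part again applies and gives $a<n_0$. But $a=b+n\ge n_0$, a contradiction.

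No step is a real obstacle. The only point needing care is the distinctness of the swapped pairs, which is what makes $r(m)\ge2$ hold; it was settled above by observing that $a=a'$ would force $b=b'$.
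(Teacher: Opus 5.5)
Your proof is correct and follows essentially the same swap argument as the paper. You rewrite the equal differences as two distinct representations of $a+b'=a'+b$, use eventual uniqueness to place that sum below $n_0$, and conclude by nonnegativity; your contradiction via $b=a+n\ge n_0$ (resp.\ $a=b+n\ge n_0$) is just the paper's observation $|b-a|<n_0$ spelled out.
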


\begin{proof}
The equality of differences gives $a+b'=a'+b$. Distinct pairs have $a\ne a'$ and $b\ne b'$, so these are two distinct representations of a sum. By eventual uniqueness, their common value is less than $n_0$. Nonnegativity gives $a,a',b,b'<n_0$, and hence $|b-a|<n_0$. This proves both inequalities in \eqref{eq:eventual-differences}.
\end{proof}

To keep track of the finitely many exceptions in \eqref{eq:eventual}, put
\begin{equation}
  \delta(n)=r(n)-1\quad(n\ge0),\qquad
  E=\sum_{n=0}^{n_0-1}|\delta(n)|,\qquad C=E+n_0.
  \label{eq:fixed-errors}
\end{equation}
Thus $\delta$ is supported on $[0,n_0-1]$, and for every integer $N\ge n_0-1$,
\begin{equation}
  \sum_{n=0}^{N}r(n)=N+1+c,
  \qquad c=\sum_{n=0}^{n_0-1}\delta(n),\qquad |c|\le E.
  \label{eq:sum-count}
\end{equation}
The constants $E$, $C$ and $c$ depend only on the values of $r(n)$ below $n_0$, so they do not depend on the gaps chosen later.

For later use, write $\ind_S$ for the indicator of a set $S\subseteq\Nzero$. All functions are extended by zero to negative integers. Their convolution is
\[
  (u*v)(n)=\sum_{j=0}^{n}u(j)v(n-j)\qquad(n\ge0).
\]
Convolution is commutative and associative, and convolution with $h=\ind_{\Nzero}$ gives partial sums.

\section{Long gaps from bounded excess}
\label{sec:reduction}

We first find an empty upper half of an initial segment, as in \cite[Section~4]{YZ26}. A second comparison, at $X_j-x_j$, gives a gap extending beyond $3x_j$.

\Needspace{10\baselineskip}
\begin{lemma}
\label{lem:gaps}
Suppose that the assumptions of Lemma~\ref{lem:bounded} hold. Assume in addition that there is a constant $\kappa$ with $1<\kappa<3/2$ such that
\begin{equation}
  A(t)B(t)\le\kappa t
  \qquad\text{for all sufficiently large real }t.
  \label{eq:global-bound}
\end{equation}
Fix $n_0$ as in \eqref{eq:eventual}. After interchanging $A$ and $B$ if necessary, there are consecutive elements $x_j<y_j$ of $A$ such that $x_j\to\infty$ and
\begin{equation}
  x_j\ge2n_0,\qquad y_j\ge3x_j+n_0.
  \label{eq:large-gaps}
\end{equation}
\end{lemma}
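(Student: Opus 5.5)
The plan is to work along the sequence $X_j$ of Lemma~\ref{lem:bounded}. I take the rectangle identity \eqref{eq:rectangle} together with eventual uniqueness \eqref{eq:eventual} and Lemma~\ref{lem:differences}. From these I first extract an empty upper half of $[0,X_j]$ in one of the two sets. I then compare counting functions at a second point to push the gap past $3x_j$, and it is here that $\kappa<3/2$ is used.

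\emph{Step 1 (empty upper half).} Let $a^*=\max A\cap[0,X_j]$ and $b^*=\max B\cap[0,X_j]$. Every pair $(a^*,b)$ with $b\in B\cap(X_j-a^*,X_j]$ is counted by $T(X_j)$, and similarly for $b^*$. So \eqref{eq:tight-sequence} gives
\[
  \#\bigl(B\cap(X_j-a^*,X_j]\bigr)=O(1),\qquad \#\bigl(A\cap(X_j-b^*,X_j]\bigr)=O(1).
\]
Integers $n\in(\max(X_j-a^*,X_j-b^*)+n_0,X_j]$ have unique representations $n=a+b$, and the sum of the larger summands in such a representation must lie in $A\cup B$ near the top of $[0,X_j]$. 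Combining this with the bounded counts, I will show that $\min(a^*,b^*)$ cannot exceed roughly $X_j/2$ unless $T(X_j)$ is large. Indeed, if both $a^*$ and $b^*$ exceed $X_j/2$, then $a^*+b^*>X_j$ already, and iterating over the top elements produces unboundedly many pairs. After interchanging $A$ and $B$, and passing to a subsequence, this gives $A\cap(x_j,X_j]=\emptyset$ with $x_j\le X_j/2+O(1)$. Here $x_j\in A$ is the largest element of $A$ up to $X_j$, and $x_j\to\infty$ because $A$ is infinite while $A(X_j)B(X_j)\sim X_j$ forces both counting functions to grow. In particular $x_j\ge 2n_0$ eventually.

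\emph{Step 2 (extending the gap).} Let $y_j$ be the next element of $A$ after $x_j$, so $y_j>X_j$. For each integer $n\in[n_0,y_j)$, the unique representation uses some $a\le x_j$, so $B\cap[n-x_j,n]\neq\emptyset$. Counting the representations of $[n_0,t]$ with $a\le x_j$ and using uniqueness yields $A(x_j)B(t)\ge t-O(1)$ for $t<y_j$. A reverse inequality comes from \eqref{eq:rectangle} evaluated at $X_j$ and at $X_j-x_j$. The representations of integers in $(X_j-x_j,X_j]$ must use $b$'s in $(X_j-2x_j,X_j]$, and by Lemma~\ref{lem:differences} these $b$'s are essentially disjoint translates. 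From this I get $A(x_j)\bigl(B(X_j)-B(X_j-x_j)\bigr)=x_j+O(1)$, so $B$ has density $\approx 1/A(x_j)$ on every window of length $x_j$.

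Now suppose $y_j<3x_j+n_0$ along a subsequence. Then at $t=y_j+x_j$ (or just below it), the element $y_j$ adds about $B(x_j)$ new sums while $B$ keeps density $1/A(x_j)$. A short computation then gives
\[
  A(t)B(t)\ge\Bigl(2-O\bigl(\tfrac{x_j}{y_j}\bigr)\Bigr)\frac{y_j}{t}\,t,
\]
and I will arrange this lower bound to be at least $(3/2-o(1))t$ when $y_j\le 3x_j$, contradicting \eqref{eq:global-bound} because $\kappa<3/2$. Hence $y_j\ge 3x_j+n_0$ eventually.

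The main obstacle is Step~2: pinning down the exact factor $3$. This requires sharp (not merely $O(x_j)$) control of $B$ on $[y_j-x_j,y_j+x_j]$ in terms of $A(x_j)$. I would first try to get it by the reflected-convolution count of Lemma~\ref{lem:differences} applied to the differences $b-a$ with $a\le x_j$, where uniqueness for $n\ge n_0$ makes the $A(x_j)$ translates of $B$ tile the interval up to $O(1)$ errors.
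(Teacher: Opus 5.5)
There is a genuine gap, and it is in Step~2. The contradiction you aim for runs the wrong way. At $t=y_j+x_j$ your bound reads $A(t)B(t)/t\ge\bigl(2-O(x_j/y_j)\bigr)\,y_j/(y_j+x_j)$, and $2y/(y+x)$ increases with $y/x$. Under the assumption $y_j\le 3x_j$ this bound is therefore \emph{at most} $3/2$. The error $O(x_j/y_j)$ is not even small in that regime. So nothing contradicts $\kappa<3/2$. This is the computation the paper uses to finish the main theorem, where it works because $y\ge 3x+n_0$ is already known.

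Worse, the estimates you would need on $[y_j-x_j,y_j+x_j]$ are a gap in $B$ there, $A(y_j+x_j)\approx 2A(x_j)$ and $A(x_j)B(y_j+x_j)\approx y_j$. The paper derives these \emph{from} the hypothesis $y\ge 3x+n_0$, so this route is circular. Your window claim $A(x_j)\bigl(B(X_j)-B(X_j-x_j)\bigr)=x_j+O(1)$ also contradicts your own Step~1 observation: with $a^*=x_j$, you showed there that $\#\bigl(B\cap(X_j-x_j,X_j]\bigr)\le T(X_j)=O(1)$.

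That observation is the missing idea. You never need to look beyond $y_j$: since $y_j>X_j$, it suffices to show $x_j\le(1/3-\varepsilon)X_j$. To get this, apply the global bound at $X_j-x_j$. If $x_j\le X_j/2$, then $A(X_j-x_j)=A(X_j)$ and $B(X_j-x_j)=B(X_j)-O(1)$. Since $A(X_j)=o(X_j)$, this gives
\[
  A(X_j-x_j)B(X_j-x_j)=X_j+o(X_j)\le\kappa(X_j-x_j),
\]
and hence $x_j/X_j\le(\kappa-1)/\kappa+o(1)<1/3$.

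Step~1 needs repair as well. The phrase ``iterating over the top elements produces unboundedly many pairs'' is not an argument, and the $O(1)$ error is unjustified. The paper uses the global bound here too. Let $u_j,v_j$ count the elements of $A,B$ in $(X_j/2,X_j]$; then $u_jv_j\le T(X_j)=O(1)$. If both were positive infinitely often, both would be bounded. That would give $A(X_j/2)B(X_j/2)=X_j+o(X_j)$, which is impossible since $\kappa<2$. So eventually one set, say $A$, misses $(X_j/2,X_j]$, which gives exactly $x_j\le X_j/2$.
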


\begin{proof}
Take $X_j$ from Lemma~\ref{lem:bounded}. Since $A(X_j)B(X_j)=X_j+O(1)$ and both counting functions tend to infinity,
\begin{equation}
  A(X_j)=o(X_j),\qquad B(X_j)=o(X_j).
  \label{eq:small-counts}
\end{equation}
Let
\[
  u_j=A(X_j)-A(X_j/2),\qquad
  v_j=B(X_j)-B(X_j/2).
\]
Pairs with both elements greater than $X_j/2$ are counted by $T(X_j)$, so
\begin{equation}
  u_jv_j\le T(X_j)=O(1).
  \label{eq:upper-halves}
\end{equation}
If both $u_j$ and $v_j$ were positive for infinitely many $j$, they would both be bounded on that subsequence. Hence
\begin{align*}
  A(X_j/2)B(X_j/2)
  &=(A(X_j)-u_j)(B(X_j)-v_j)\\
  &=X_j+O(A(X_j)+B(X_j)+1)=X_j+o(X_j).
\end{align*}
This contradicts \eqref{eq:global-bound} at $X_j/2$, since $\kappa<2$.

Thus at least one of $u_j,v_j$ is zero for all large $j$. After interchanging the sets and passing to a subsequence, we may assume that $A\cap(X_j/2,X_j]=\varnothing$. Put $x_j=\max(A\cap[0,X_j])$. Then $x_j\to\infty$ and
\begin{equation}
  x_j\le X_j/2,\qquad A\cap(x_j,X_j]=\varnothing.
  \label{eq:initial-gap}
\end{equation}

We now compare the counting functions at $X_j-x_j$. Every $b\in B\cap(X_j-x_j,X_j]$ gives a pair $(x_j,b)$ counted by $T(X_j)$, so
\[
  0\le B(X_j)-B(X_j-x_j)\le T(X_j)=O(1).
\]
Also $A(X_j-x_j)=A(X_j)$ by \eqref{eq:initial-gap}. Therefore
\begin{equation}
  A(X_j-x_j)B(X_j-x_j)
  =X_j+O(A(X_j)+1)=X_j+o(X_j).
  \label{eq:shifted-count}
\end{equation}
Applying \eqref{eq:global-bound} at $X_j-x_j$ gives $X_j+o(X_j)\le\kappa(X_j-x_j)$, or
\begin{equation}
  \frac{x_j}{X_j}\le\frac{\kappa-1}{\kappa}+o(1).
  \label{eq:relative-gap}
\end{equation}
Since $(\kappa-1)/\kappa<1/3$, there is a fixed $\varepsilon>0$ such that $X_j-3x_j\ge\varepsilon X_j$ for all large $j$.

Let $y_j$ be the next element of $A$ after $x_j$. By \eqref{eq:initial-gap}, $y_j>X_j$, so $y_j\ge3x_j+n_0$ for large $j$. Since $x_j\to\infty$, we also have $x_j\ge2n_0$ eventually. Discarding finitely many terms proves the lemma.
\end{proof}

\section{Relations between the two counting functions}
\label{sec:relations}

We first show that the gap in $A$ forces a gap in $B$. This will allow us to estimate both counting functions on $[y,y+x]$.

\begin{lemma}
\label{lem:gap-propagation}
Let $A,B\subseteq\Nzero$ satisfy \eqref{eq:eventual} for some integer $n_0\ge1$. If $x<y$ are consecutive elements of $A$ with
\[
  x\ge2n_0,\qquad y\ge3x+n_0,
\]
then
\begin{equation}
  B\cap[y-x+n_0,\ y+x+n_0-1]=\varnothing.
  \label{eq:B-gap}
\end{equation}
\end{lemma}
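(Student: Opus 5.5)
The plan is to argue by contradiction. Suppose some $b\in B$ lies in $[y-x+n_0,\,y+x+n_0-1]$. We will produce an integer $n\ge n_0$ with two distinct representations $a+b_1=y+b_2$, contradicting \eqref{eq:eventual}. Equivalently, we produce two distinct pairs with the same difference, contradicting Lemma~\ref{lem:differences}.

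Write $A_x=A\cap[0,x]$. Since $x<y$ are consecutive in $A$, every representation of an integer $n<y$ uses an element of $A_x$. Hence \eqref{eq:eventual} gives the finite convolution identity
\[
  (\ind_{A_x}*\ind_B)(n)=1\qquad(n_0\le n\le y-1).
\]
Convolving with $h$, or equivalently differencing, turns this into a statement about windows of $B$. For each $m$, the translates $a+B$ with $a\in A_x$ tile $[n_0,y-1]$ exactly once. The first step is to use this tiling to describe $B\cap[y-2x,\,y-1]$ in terms of $B\cap[0,x+n_0]$. The idea is that, by the tiling, the pattern of $B$ just below $y$ is forced by what $A_x$ covers. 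The hypothesis $y\ge 3x+n_0$ ensures that the interval $[n_0,y-1]$ has length at least $2x$. This leaves room for a full period of the reflected identity $\ind_{A_x}(x-\cdot)*\ind_B$, the reflected identity used in \cite[Lemma~3.1]{YZ26}.

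Next, look at integers $n\in[y,\,y+x+n_0-1]$. Each such $n$ has the representation $n=y+(n-y)$ exactly when $n-y\in B$. Otherwise it is represented as $a+b'$ with $a\in A_x$ and $b'\ge n-x\ge y-x$. Summing over this window gives a second identity of the form
\[
  (\ind_{A_x}*\ind_B)(n)+\ind_B(n-y)=1\qquad(y\le n\le y+x+n_0-1),
\]
and the contribution of $A\cap[y+1,\infty)$ is absent here. Subtracting the continuation of the tiling from the first identity, which is legitimate because $x\ge 2n_0$ keeps all exceptional pairs below $n_0$ out of the range, should show the following. Any $b\in B$ in $[y-x+n_0,\,y+x+n_0-1]$ forces some $a\in A_x$ and some $b_2\in B$ with $b_2\le x+n_0-1$ and $b+a=y+b_2$. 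Both sides then exceed $n_0$, and the pairs $(a,b)$ and $(y,b_2)$ are distinct since $a\le x<y$. This contradicts $r(b+a)=1$.

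I expect the main obstacle to be this middle step. One has to show that the element $b$ in the forbidden window actually collides with a translate $y+B_{\text{small}}$, not merely that the counts are off by $O(1)$. The count must be exact. The hypothesis $y\ge3x+n_0$, rather than just $y>2x$, must be used precisely here, so that the shifted window $[b-x,\,b]$ lies entirely in the region where the tiling identity holds with no boundary terms. My first attempt will be to pick $b$ minimal in the window and compare the windows $[b-x,b]$ and $[b-y,\,b-y+x]$ via Lemma~\ref{lem:differences}. The aim is to show that the difference $b-a$, for a suitable $a\in A_x$, is realized a second time as $b_2-0$ or $b_2-a'$, giving two distinct pairs with equal difference exceeding $n_0$.
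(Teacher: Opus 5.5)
There is a genuine gap. Your closing collision is the right one, and it is the paper's: if $b\in B$ lies in the window, you want $a\in A\cap[0,x]$ and $b'\in B$ with $b-y=b'-a$. Then $(a,b)$ and $(y,b')$ are distinct pairs with the same sum $b+a\ge y\ge n_0$. But the proposal never proves that such a representation of $b-y$ exists, and the route you sketch toward it rests on a false identity.

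Your second identity, $(\ind_{A_x}*\ind_B)(n)+\ind_B(n-y)=1$ for $y\le n\le y+x+n_0-1$, assumes that representations of such $n$ use either some $a\le x$ or $a=y$. Nothing excludes elements of $A$ in $(y,\,y+x+n_0-1]$. In fact Lemma~\ref{lem:relations} shows that $A$ roughly repeats $A\cap[0,x]$ there. The digit example of Section~\ref{sec:sharpness} is a concrete counterexample. There $r(n)=1$ for all $n$, so take $n_0=1$, $x=5$, $y=16=3x+n_0$. Then $17=17+0$, while $17-a\notin B=\{0,2,8,10,32,\dots\}$ for $a\in\{0,1,4,5\}$ and $1\notin B$. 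So your left side is $0$ at $n=17$. Your ``subtract the continuation of the tiling'' step therefore has nothing valid to act on. Even setting that aside, ``should show'' marks exactly the step that carries the whole lemma.

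The missing idea is the following statement. Every integer in $[n_0-x,\,x+n_0-1]$ must be a difference $b'-a$ with $a\in A\cap[0,x]$ and $b'\in B$. Equivalently, $D=h-\widetilde f*g$ vanishes on $[n_0,\,2x+n_0-1]$, where $f=\ind_{A\cap[0,x]}$, $g=\ind_B$ and $\widetilde f(n)=f(x-n)$.

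The paper proves this in four steps:
\begin{enumerate}
\item Since $f*g=h+\delta$ below $y$, one gets $f*D=(f-\widetilde f)*h-\widetilde f*\delta$ there. Hence $(f*D)(n)=0$ for $x+n_0\le n<y$.
\item Lemma~\ref{lem:differences} gives $D\ge0$ on $[x+n_0,\infty)$.
\item Evaluate at $n=u+x$ with $x+n_0\le u\le 2x+n_0-1$. This gives a vanishing sum of nonnegative terms, and the term $a=x$ is $D(u)$, so $D(u)=0$. This is exactly where $u+x\le 3x+n_0-1<y$, i.e.\ $y\ge 3x+n_0$, is used.
\item A largest-nonzero-index argument pushes the vanishing down to $n_0$.
\end{enumerate}
The exact count you correctly say you need comes from this sign information together with isolating the $a=x$ term. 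Your plan to compare windows around a minimal $b$ gives no indication of where such exactness would come from.
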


\begin{proof}
Put
\[
  f=\ind_{A\cap[0,x]},\qquad g=\ind_B,\qquad h=\ind_{\Nzero}.
\]
Define the reflected function $\widetilde f$ by $\widetilde f(n)=f(x-n)$, and put
\[
  D=h-\widetilde f*g.
\]
For $n\ge0$, the coefficient of the convolution is
\begin{equation}
  (\widetilde f*g)(n)
  =\#\{(a,b)\in(A\cap[0,x])\times B\mid b-a=n-x\}.
  \label{eq:difference-count}
\end{equation}
This follows by writing $a=x-j$ and $b=n-j$ in the convolution sum. Thus $D(n)$ is $1$ minus the number of these representations of $n-x$.

Since $A\cap(x,y)=\varnothing$, every representation of an integer below $y$ uses $a\le x$. Hence $f*g=h+\delta$ on $[0,y-1]$. Using $f*D=f*h-\widetilde f*(f*g)$, we obtain
\begin{equation}
  f*D=(f-\widetilde f)*h-\widetilde f*\delta
  \qquad\text{at every index }0\le n<y.
  \label{eq:convolution-identity}
\end{equation}
Only indices at most $n$ occur in the substitution, so the identity is valid throughout this range.

For $n\ge x$, both $(f*h)(n)$ and $(\widetilde f*h)(n)$ equal $A(x)$. Also, $\widetilde f$ vanishes outside $[0,x]$, so $(\widetilde f*\delta)(n)=0$ for $n\ge x+n_0$. It follows from \eqref{eq:convolution-identity} that
\begin{equation}
  (f*D)(n)=0\qquad(x+n_0\le n<y).
  \label{eq:convolution-zero}
\end{equation}

To determine where $D$ vanishes, first note that by Lemma~\ref{lem:differences}, a repeated difference can involve only elements less than $n_0$. In view of \eqref{eq:difference-count}, this gives
\begin{equation}
  D(n)<0\quad\Longrightarrow\quad
  x-n_0+1\le n\le x+n_0-1.
  \label{eq:negative-support}
\end{equation}
In particular, $D(n)\ge0$ whenever $n\ge x+n_0$.

Take an integer $u$ with $x+n_0\le u\le2x+n_0-1$. The gap hypothesis gives
\[
  u+x\le3x+n_0-1<y.
\]
Using \eqref{eq:convolution-zero} at the index $u+x$, we obtain
\[
  0=(f*D)(u+x)=\sum_{\substack{a\in A\\a\le x}}D(u+x-a).
\]
Every argument is at least $x+n_0$, so every summand is nonnegative by \eqref{eq:negative-support}. The term with $a=x$ is $D(u)$, and therefore
\begin{equation}
  D(u)=0\qquad(x+n_0\le u\le2x+n_0-1).
  \label{eq:right-zero}
\end{equation}

We claim that the larger interval also satisfies
\begin{equation}
  D(n)=0\qquad(n_0\le n\le2x+n_0-1).
  \label{eq:full-zero}
\end{equation}
Otherwise, choose the largest integer $d$ in this interval for which $D(d)\ne0$. Relation \eqref{eq:right-zero} gives $d\le x+n_0-1$. Consequently,
\[
  x+n_0\le x+d\le2x+n_0-1<y.
\]
Equation \eqref{eq:convolution-zero} at $x+d$ now gives
\[
  0=(f*D)(x+d)=\sum_{\substack{a\in A\\a\le x}}D(x+d-a).
\]
For $a<x$, the argument satisfies
\[
  d<x+d-a\le2x+n_0-1.
\]
All such terms vanish by the choice of $d$. The term for $a=x$ is $D(d)$, which is nonzero. This contradiction proves \eqref{eq:full-zero}.

We can now prove the gap in $B$. By \eqref{eq:difference-count} and \eqref{eq:full-zero}, every integer in
\[
  [n_0-x,x+n_0-1]
\]
has a representation as $b'-a$ with $a\le x$. If $b$ belonged to the interval in \eqref{eq:B-gap}, then $b-y$ would lie in this interval of differences. Hence
\[
  b-y=b'-a
\]
for some $a\in A\cap[0,x]$ and $b'\in B$. The pairs $(y,b)$ and $(a,b')$ are distinct, and $y\ge n_0$. This contradicts Lemma~\ref{lem:differences} and proves \eqref{eq:B-gap}.
\end{proof}

The next lemma uses this gap to control $B$ and then compares the two segments of $A$.

\begin{lemma}
\label{lem:relations}
Under the assumptions of Lemma~\ref{lem:gap-propagation}, let $C=E+n_0$ be as in \eqref{eq:fixed-errors}. For every integer $t$ with $0\le t\le x$,
\begin{equation}
  \abs{A(y+t)-A(x)-A(t)}\le C
  \label{eq:A-repetition}
\end{equation}
and
\begin{equation}
  \abs{A(x)B(y+t)-y}\le C.
  \label{eq:B-value}
\end{equation}
\end{lemma}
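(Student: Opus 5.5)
The plan is to read off both estimates from the eventual count \eqref{eq:sum-count}, written as
\[
  \sum_{n=0}^{N}r(n)=\sum_{a\in A,\ a\le N}B(N-a)=N+1+c,
\]
evaluated at suitable $N$ just beyond $y$. The key input is the gap \eqref{eq:B-gap} of Lemma~\ref{lem:gap-propagation}. It says that $B(s)$ takes a constant value $\beta$ for all $s\in[y-x+n_0-1,\ y+x+n_0-1]$. Since $x\ge2n_0$, this interval contains $[y,y+x]$, so $B(y+t)=\beta$ for all $0\le t\le x$. Hence \eqref{eq:B-value} reduces to showing $\abs{A(x)\beta-y}\le C$.

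\textbf{Step 1 (the value of $B$).} Take $N=y+n_0-1$. Because $A\cap(x,y)=\varnothing$, the elements $a\le N$ of $A$ are those $a\le x$ together with elements $a\in[y,y+n_0-1]$.
\begin{itemize}
\item For $a\le x$, the argument $N-a$ lies in $[y-x+n_0-1,\ y+n_0-1]$. This is inside the constancy interval, so these terms contribute exactly $A(x)\beta$.
\item The terms with $a\ge y$ contribute $\sum_{y\le a\le y+n_0-1}B(y+n_0-1-a)$. This is a nonnegative quantity of size at most $n_0$, since already the $a=y$ term is $B(n_0-1)\le n_0$.
\end{itemize}
This gives $A(x)\beta=y+n_0+c-(\text{that term})$. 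I would like to simply bound the correction by $n_0$. If the crude bound leaks, I will pick $N$ more carefully (e.g.\ $N=y-1$, or $N=y$ with only the $a=y$ term $B(0)\in\{0,1\}$) so that the error is visibly at most $E+n_0=C$. Together with $|c|\le E$, this yields \eqref{eq:B-value}.

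\textbf{Step 2 (the repetition of $A$).} Put $A'=(A\cap[y,y+x])-y\subseteq[0,x]$ and $k=\ind_{A'}-\ind_{A\cap[0,x]}$. Then $(k*h)(t)=A(y+t)-A(x)-A(t)$, which is exactly the quantity in \eqref{eq:A-repetition}. Now apply the count at $N=y+t$ for $0\le t\le x$:
\begin{itemize}
\item the $a\le x$ part again equals $A(x)\beta$;
\item the rest equals $(\ind_{A'}*g*h)(t)$, with $g=\ind_B$.
\end{itemize}
Comparing with the count at $N=t$, namely $(\ind_{A\cap[0,x]}*g*h)(t)=t+1+c$ for $t\ge n_0-1$, gives
\[
  (k*g*h)(t)=y-A(x)\beta \qquad (n_0-1\le t\le x).
\]
The right side is a constant independent of $t$. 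Taking first differences therefore gives $(k*g)(t)=0$ for $n_0\le t\le x$.

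\textbf{Step 3 (inverting the convolution).} Next I convolve with $f=\ind_{A\cap[0,x]}$ and use $f*g=h+\delta$ on $[0,y-1]$, exactly as in the proof of Lemma~\ref{lem:gap-propagation}. This gives
\[
  k*h=f*(k*g)-k*\delta \qquad\text{on }[0,x].
\]
Then:
\begin{itemize}
\item $(k*\delta)(t)$ is bounded by $E$, since $\delta$ is supported on $[0,n_0-1]$ and $|k|\le1$.
\item $(f*(k*g))(t)$ only sees the values $(k*g)(s)$ with $s<n_0$.
\end{itemize}

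The main obstacle is this last bookkeeping, namely bounding $f*(k*g)$ restricted to $s<n_0$ by $n_0$ uniformly in $t$. A naive bound gives something like $n_0\cdot A(n_0)$. To sharpen it, I would use $x\ge2n_0$ and the differences Lemma~\ref{lem:differences} to show that $k$ vanishes on $[0,n_0-1]$ up to controlled terms. Failing that, I would use the Step~2 identity at $t=n_0-1$ directly to pin down the small values, so that the total error is at most $E+n_0=C$.
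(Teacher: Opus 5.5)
There is a genuine gap, and it sits at the decisive step. Your Steps 1 and 2 are essentially sound. Step 2 is a valid, if roundabout, way of obtaining what the paper reads off directly from \eqref{eq:B-gap}, namely $(\ind_{A'}*g)(t)=1$ for $n_0\le t\le x$. (A representation of $y+n$ with $a\le x$ would need an element of $B$ in the empty interval.)

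The gap is in Step 3, exactly where the constant $C=E+n_0$ must come out. You leave the estimate open, and the split you aim for is the wrong one. The missing observation is the value of $k*g$ below $n_0$:
\begin{itemize}
\item For $0\le j<n_0$, $(\ind_{A'}*g)(j)$ counts some of the representations of $y+j\ge n_0$, so it lies in $\{0,1\}$.
\item For the same $j$, $(f*g)(j)=r(j)=1+\delta(j)$, since $j<y$.
\end{itemize}
Combined with Step 2, this gives $k*g=\delta_y-\delta$ on $[0,x]$, where $\delta_y$ is supported on $[0,n_0-1]$ and takes values in $\{-1,0\}$. Writing $k*\delta=\ind_{A'}*\delta-f*\delta$, the two copies of $f*\delta$ cancel:
\[
  k*h=f*(k*g)-k*\delta=f*\delta_y-\ind_{A'}*\delta\qquad\text{on }[0,x].
\]
This is the paper's identity \eqref{eq:segment-comparison}, and it gives $\abs{k*h}\le n_0+E$ immediately.

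Without this cancellation, the natural bound for $f*(k*g)=f*\delta_y-f*\delta$ is $n_0+E$. Bounding it and $k*\delta$ separately therefore gives $n_0+2E$, not $C$. Your proposed repairs do not supply the cancellation either:
\begin{itemize}
\item $k(j)=\ind_A(y+j)-\ind_A(j)$ need not vanish on $[0,n_0-1]$; already $k(0)=1-\ind_A(0)$.
\item The Step~2 identity at $t=n_0-1$ only fixes the total $\sum_{j<n_0}(k*g)(j)$, not the $t$-dependent sums $\sum_{j<n_0}f(t-j)(k*g)(j)$.
\end{itemize}

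A smaller point concerns Step 1. The reason you give for the correction term being at most $n_0$ (the $a=y$ term is $B(n_0-1)\le n_0$) controls only one of possibly several terms. The bound is nonetheless true. The pairs counted there are distinct representations of integers in $[y,y+n_0-1]$, all at least $n_0$, so they have distinct sums. Alternatively, take the lower bound as the paper does, from $y+c=\sum_{n<y}r(n)\le A(x)B(y-1)=A(x)\beta$. Then keep your count at $N=y+n_0-1$, with the correction dropped, for the upper bound.
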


\begin{proof}
Put $m=A(x)$. Since $x\ge2n_0$ and $n_0\ge1$, the gap in \eqref{eq:B-gap} contains $[y,y+x]$. Thus
\begin{equation}
  B(y+t)=B(y-1)\qquad(0\le t\le x).
  \label{eq:B-constant}
\end{equation}

Consider the $mB(y-1)$ pairs in
\[
  (A\cap[0,x])\times(B\cap[0,y-1]).
\]
Every representation of an integer below $y$ lies in this rectangle, because $A\cap(x,y)=\varnothing$. Conversely, \eqref{eq:B-gap} implies that every $b\in B\cap[0,y-1]$ satisfies $b\le y-x+n_0-1$, so every sum from the rectangle is at most $y+n_0-1$. Counting representations with their multiplicities and using \eqref{eq:sum-count}, we obtain
\begin{equation}
  y+c=\sum_{n=0}^{y-1}r(n)
  \le mB(y-1)
  \le\sum_{n=0}^{y+n_0-1}r(n)=y+n_0+c.
  \label{eq:rectangle-bounds}
\end{equation}
Since $|c|\le E$, equations \eqref{eq:B-constant} and \eqref{eq:rectangle-bounds} imply \eqref{eq:B-value}.

It remains to compare the two segments of $A$. Put $f=\ind_{A\cap[0,x]}$, $g=\ind_B$, $h=\ind_{\Nzero}$ and
\[
  f_y(n)=\ind_A(y+n)\qquad(n\ge0),
\]
and extend $f_y$ by zero to negative integers. For $n_0\le n\le x+n_0-1$, consider the unique representation of $y+n$. If it used an element $a\le x$, then
\[
  y-x+n_0\le y+n-a\le y+x+n_0-1,
\]
so the required element of $B$ would belong to the empty interval \eqref{eq:B-gap}. It must therefore use an element $a\ge y$. Thus
\[
  (f_y*g)(n)=1\qquad(n_0\le n\le x+n_0-1).
\]
For $0\le n<n_0$, the convolution $(f_y*g)(n)$ is either $0$ or $1$, since it counts a subset of the representations of $y+n$, whose representation is unique. Thus there is a function $\delta_y$, supported on $[0,n_0-1]$ and taking values in $\{-1,0\}$, such that
\begin{equation}
  f_y*g=h+\delta_y
  \qquad\text{at indices }0\le n\le x+n_0-1.
  \label{eq:next-convolution}
\end{equation}
The old segment satisfies
\[
  f*g=h+\delta\qquad\text{at indices }0\le n<y.
\]
Substituting these identities into $f_y*(f*g)=f*(f_y*g)$ gives
\begin{equation}
  ((f_y-f)*h)(t)=(f*\delta_y)(t)-(f_y*\delta)(t)
  \qquad(0\le t\le x).
  \label{eq:segment-comparison}
\end{equation}
The substitution uses only indices at most $t\le x$. Since $A(y-1)=m$, the left side is
\[
  A(y+t)-m-A(t).
\]
On the right, $f$ and $f_y$ take values in $\{0,1\}$, and hence
\[
  |(f*\delta_y)(t)|\le n_0,\qquad
  |(f_y*\delta)(t)|\le E.
\]
Equation \eqref{eq:segment-comparison} therefore proves \eqref{eq:A-repetition} with the same fixed constant $C=E+n_0$.
\end{proof}

\section{Proof of Theorem \ref{thm:main}}
\label{sec:main-proof}

\begin{proof}
Suppose that \eqref{eq:main-hypothesis} holds but \eqref{eq:excess} fails. Choose $\kappa$ strictly between $1$ and $3/2$ so that \eqref{eq:global-bound} holds. Lemmas~\ref{lem:bounded} and \ref{lem:gaps} give an integer $n_0\ge1$ satisfying \eqref{eq:eventual} and, after interchanging the sets if necessary, consecutive elements $x_j<y_j$ satisfying \eqref{eq:large-gaps}.

Fix $C=E+n_0$ as in \eqref{eq:fixed-errors}, and put $m_j=A(x_j)$. Since $A$ is infinite and $x_j\to\infty$, we have $m_j\to\infty$. Apply Lemma~\ref{lem:relations} at $x=x_j$, $y=y_j$, and $t=x_j$. Equations \eqref{eq:A-repetition} and \eqref{eq:B-value} give
\[
  A(y_j+x_j)\ge2m_j-C,\qquad
  m_jB(y_j+x_j)\ge y_j-C.
\]
For large $j$, the two lower bounds are positive. Using \eqref{eq:global-bound} at $t=y_j+x_j$, we obtain
\begin{equation}
\begin{aligned}
  \kappa
  &\ge\frac{A(y_j+x_j)B(y_j+x_j)}{y_j+x_j}\\
  &\ge\left(2-\frac{C}{m_j}\right)
       \left(1-\frac{C}{y_j}\right)
       \frac{y_j}{y_j+x_j}\\
  &\ge\frac34
       \left(2-\frac{C}{m_j}\right)
       \left(1-\frac{C}{y_j}\right).
\end{aligned}
  \label{eq:final-bound}
\end{equation}
The last inequality follows from $y_j\ge3x_j+n_0$. As $m_j$ and $y_j$ both tend to infinity, letting $j\to\infty$ in \eqref{eq:final-bound} gives $\kappa\ge3/2$. This contradicts the choice of $\kappa$ and proves Theorem~\ref{thm:main}.
\end{proof}

\section{Sharpness}
\label{sec:sharpness}

We recall a digit example and verify the endpoint by direct counting. It is a special case of Ma's construction \cite[Lemma~2.1 and equation~(3.1)]{Ma22}; the existence of examples at $3/2$ was proved earlier by Chen and Fang \cite{CF11}.

Let
\begin{equation}
  A=\left\{\sum_{i\ge0}\varepsilon_i4^i\ \middle|\
       \varepsilon_i\in\{0,1\},\
       \text{only finitely many }\varepsilon_i\ne0\right\},
  \qquad B=2A.
  \label{eq:digit-sets}
\end{equation}
Every base-$4$ digit is uniquely the sum of a digit in $\{0,1\}$ and one in $\{0,2\}$, with no carries. Thus every nonnegative integer has exactly one representation in $A+B$. At $X_k=4^k-1$, counting the first $k$ digits gives
\begin{equation}
  A(X_k)=B(X_k)=2^k,\qquad A(X_k)B(X_k)-X_k=1.
  \label{eq:unit-excess}
\end{equation}

To bound the product ratio, write $v=\sum_i\varepsilon_i2^i$ and put $a_v=\sum_i\varepsilon_i4^i$. These $a_v$ list $A$ in increasing order. Since $\sum_{j<i}\varepsilon_j2^j\le2^i-1$,
\begin{equation}
\begin{aligned}
  v^2
  &=\sum_i\varepsilon_i4^i
    +2\sum_i\varepsilon_i2^i\sum_{j<i}\varepsilon_j2^j\\
  &\le\sum_i\varepsilon_i4^i+2\sum_i\varepsilon_i2^i(2^i-1)
  \le3a_v.
\end{aligned}
  \label{eq:digit-lower-bound}
\end{equation}

Fix $k\ge0$ and put $H=2^k$. In $[4^k,4^{k+1})$, the elements of $A$ are $4^k+a_v$, and those of $B$ are $2(4^k+a_v)$, with $0\le v<H$. As $a_v\le(4^k-1)/3$, the $A$ block ends before the $B$ block begins. Counting at these points gives
\[
\begin{aligned}
  A(4^k+a_v)&=H+v+1,& B(4^k+a_v)&=H,\\
  A(2(4^k+a_v))&=2H,& B(2(4^k+a_v))&=H+v+1.
\end{aligned}
\]
At either point, the product ratio is
\[
  \frac{H(H+v+1)}{H^2+a_v}
  \le\frac{H(H+v)}{H^2+v^2/3}+\frac1H
  \le\frac32+\frac1H,
\]
where the last inequality follows from
\[
  \frac32\left(H^2+\frac{v^2}{3}\right)-H(H+v)
  =\frac{(H-v)^2}{2}\ge0.
\]
Between consecutive elements of $A\cup B$, the numerator is constant and the denominator increases. The same bound therefore holds throughout $[4^k,4^{k+1})$, giving
\begin{equation}
  \limsup_{x\to\infty}\frac{A(x)B(x)}{x}\le\frac32.
  \label{eq:example-upper}
\end{equation}

For the reverse inequality, take $z_k=2(4^k-1)/3$, the largest element of $B$ using the first $k$ digits. Then $A(z_k)=B(z_k)=2^k$, so
\[
  \frac{A(z_k)B(z_k)}{z_k}
  =\frac{3\cdot4^k}{2(4^k-1)}\longrightarrow\frac32.
\]
Together with \eqref{eq:unit-excess}, this proves that the constant $3/2$ is best possible and that the strict inequality in Theorem~\ref{thm:main} is necessary.

\section*{Acknowledgments and Declaration of AI assistance}
When reading Yang and Zhao's paper, the author believed that the large gap forced another set to emerge with a corresponding gap (their paper only focus on one set), and used ChatGPT to explore the possible relationships. OpenAI Codex was used to assist with literature searches, English editing, the organization of the manuscript, and the checking and revision of some proof details, including the treatment of the finite initial exceptions. The author is responsible for the mathematical arguments, references, and final text.

The author thanks Professor Yang and Professor Zhao for their suggestions. The author expresses his gratitude to his graduate supervisor Huixi Li for his guidance.

\end{document}